\theoremstyle{plain}
\newtheorem{theorem}{Theorem}
\newtheorem*{corollary*}{Corollary}
\newtheorem{theoremO}{Theorem}
\newtheorem{lemmO}[theoremO]{Lemma}
\newtheorem*{conjecture*}{Conjecture}
\theoremstyle{definition}
\newtheorem*{problem}{Problem}
\theoremstyle{remark}
\newcommand{\SC}{{\mathbb C}}  \newcommand{\SD}{{\mathbb D}}  
  \newcommand{\ga}{\gamma}  
    \newcommand{\la}{\lambda}
    \newcommand{\om}{\omega}
\newcommand{\De}{\Delta}
\newcommand{\be}{\begin{equation}}
\newcommand{\ee}{\end{equation}}
\newcommand{\bea}{\begin{eqnarray}}
\newcommand{\eea}{\end{eqnarray}}
\begin{document}

\title[Quasiconformal extension for harmonic mappings]{Quasiconformal extension for harmonic mappings on finitely connected domains}  

\author[I. Efraimidis]{Iason Efraimidis}
\address{Department of Mathematics and Statistics, Texas Tech University, Box 41042, Lubbock, TX 79409, United States} \email{iason.efraimidis@ttu.edu}  

\subjclass[2010]{30C55, 30C62, 31A05} 

\keywords{Harmonic mapping, Schwarzian derivative, quasiconformal extension, quasiconformal decomposition}

\maketitle

\begin{abstract}
We prove that a harmonic quasiconformal mapping defined on a finitely connected domain in the plane, all of whose boundary components are either points or quasicircles, admits a quasiconformal extension to the whole plane if its Schwarzian derivative is small. We also make the observation that a univalence criterion for harmonic mappings holds on uniform domains. 
\end{abstract}

\section{ Introduction} 

Let $f$ be a harmonic mapping in a planar domain $D$ and let $\om=\overline{f_{\overline{z}}} / f_z$ be its dilatation. According to Lewy's theorem the mapping $f$ is locally univalent if and only if its Jacobian $J_f=|f_z|^2-|f_{\overline{z}}|^2$ does not vanish. Duren's book \cite{Du} contains valuable information about the theory of planar harmonic mappings. 

The Schwarzian derivative of $f$ was defined by Hern\'andez and Mart\'in \cite{HM15} as 
\begin{equation} \label{Schw-def}
S_f \, = \, \rho_{zz} - \tfrac{1}{2} (\rho_z)^2, \qquad \text{where} \qquad \rho =\log J_f. 
\end{equation}
When $f$ is holomorphic this reduces to the classical Schwarzian derivative. Another definition, introduced by Chuaqui, Duren and Osgood \cite{CDO03}, applies to harmonic mappings which admit a lift to a minimal surface via the Weierstrass-Enneper formulas. However, focusing on the planar theory in this note we adopt the definition \eqref{Schw-def}.

We assume that $\overline{\SC}\backslash D$ contains at least three points, so that $D$ is equipped with the hyperbolic metric, defined by 
$$
\la_D \big(\pi(z)\big) |\pi'(z)| |dz| \, = \, \la_\SD(z) |dz|\, = \,\frac{|dz|}{1-|z|^2}, \qquad z\in \SD, 
$$
where $\SD$ is the unit disk and $\pi:\SD\to D$ is a universal covering map. The size of the Schwarzian derivative of a mapping $f$ in $D$ is measured by the norm
$$
\|S_f\|_D \, = \, \sup_{z\in D} \, \la_D(z)^{-2} |S_f(z)|. 
$$

A domain $D$ in $\overline{\SC}$ is a $K$-quasidisk if it is the image of the unit disk under a $K$-quasiconformal self-map of $\overline{\SC}$, for some $K\geq 1$. The boundary of a quasidisk is called a quasicircle. 

According to a theorem of Ahlfors~\cite{Ah}, if $D$ is a $K$-quasidisk then there exists a constant $c>0$, depending only on $K$, such that if $f$ is analytic in $D$ with $\|S_f\|_D \leq c$ then $f$ is univalent in $D$ and has a quasiconformal extension to $\overline{\SC}$. This has been generalized by Osgood~\cite{O80} to the case when $D$ is a finitely connected domain whose boundary components are either points or quasicircles. Further, the univalence criterion was generalized to uniform domains (see Section~\ref{sect-uniform} for a definition) by Gehring and Osgood~\cite{GO79} and, subsequently, the quasiconformal extension criterion was generalized to uniform domains  by Astala and Heinonen~\cite{AH88}. 

For harmonic mappings and the definition \eqref{Schw-def} of the Schwarzian derivative, a univalence and quasiconformal extension criterion in the unit disk $\SD$ was proved by Hern\'andez and Mart\'in~\cite{HM15-2}. This was recently generalized to quasidisks by the present author in \cite{Ef}. Moreover, in \cite{Ef} it was shown that if all boundary components of a finitely connected domain $D$ are either points or quasicircles then any harmonic mapping in $D$ with sufficiently small Schwarzian derivative is injective. The main purpose of this note is to prove the following theorem. 

\begin{theorem}\label{main-thm}
Let $D$ be a finitely connected domain whose boundary components are either points or quasicircles and let also $d\in[0,1)$. Then there exists a constant $c>0$, depending only on the domain $D$ and the constant $d$, such that if $f$ is harmonic in $D$ with $\|S_f\|_D \leq c$ and with dilatation $\om$ satisfying $|\om(z)| \leq d$ for all $z\in D$ then $f$ admits a quasiconformal extension to $\overline{\SC}$. 
\end{theorem}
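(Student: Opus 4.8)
The plan is to reduce the problem to the extension results already available for a single quasidisk, together with a gluing argument adapted to the finite connectivity of $D$. Write $K_0=(1+d)/(1-d)$. Since $f$ is harmonic with $|\om|\le d<1$ it is quasiregular, and by the injectivity result of \cite{Ef} we may choose $c$ small enough (depending on $D$) that $f$ is univalent on $D$; hence $f$ is a $K_0$-quasiconformal homeomorphism of $D$ onto $f(D)$. Any boundary component of $D$ that is a point is an isolated boundary point, across which the quasiconformal map $f$ extends by removability of isolated singularities for quasiconformal maps; we may therefore fill in all such punctures and assume henceforth that every boundary component $\Gamma_0,\dots,\Gamma_n$ of $D$ is a quasicircle. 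The remaining task is to extend $f$ across each $\Gamma_i$ into the complementary component $\Om_i$ it bounds.

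First I would fix, once and for all, a decomposition of $D$ into finitely many simply connected pieces $D_1,\dots,D_m$ by a finite system of smooth crosscuts meeting the $\Gamma_i$ transversally. Each $D_j$ is a Jordan domain bounded by finitely many quasiconformal arcs meeting at nonzero angles, hence a quasidisk whose quasiconformal constant depends only on $D$ and the fixed choice of crosscuts. By monotonicity of the hyperbolic metric under inclusion, $\la_{D_j}\ge\la_D$ on $D_j$, so $\|S_f\|_{D_j}\le\|S_f\|_D\le c$. Applying the quasidisk extension criterion of \cite{Ef} on each $D_j$ (shrinking $c$ if necessary) produces a $K_1$-quasiconformal self-map $F_j$ of $\overline{\SC}$ extending $f|_{D_j}$, with $K_1$ depending only on $D$ and $d$. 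In particular $f$ extends continuously to $\overline{D}$, and on each boundary arc $\Gamma_i\cap\partial D_j$ the image $f(\Gamma_i\cap\partial D_j)=F_j(\Gamma_i\cap\partial D_j)$ is a quasiconformal arc.

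Next I would assemble these local extensions into a global one. The images of the arcs $\Gamma_i\cap\partial D_j$ fit together along the images of the crosscut endpoints to form the Jordan curve $f(\Gamma_i)$; since each arc is a quasiconformal arc and the arcs meet at nonzero angles, $f(\Gamma_i)$ is a quasicircle, so the complementary component $\Om_i'$ of $f(D)$ that it bounds is a quasidisk. For each $i$ the restriction $f|_{\Gamma_i}\colon\Gamma_i\to\partial\Om_i'$ is a homeomorphism between quasicircles which, being the boundary correspondence of the quasiconformal map $f$ defined on the $D$-side of $\Gamma_i$, is quasisymmetric. Straightening both quasicircles by global quasiconformal maps carrying $\Om_i$ and $\Om_i'$ to $\SD$ reduces this to a quasisymmetric self-map of $\ST$, which extends to a quasiconformal self-map of $\SD$ by the Beurling--Ahlfors construction; pulling back yields a quasiconformal map $G_i\colon\Om_i\to\Om_i'$ agreeing with $f$ on $\Gamma_i$. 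Setting $F=f$ on $D$ and $F=G_i$ on each $\Om_i$ gives a homeomorphism of $\overline{\SC}$ that is quasiconformal off the finite union of quasicircles $\bigcup_i\Gamma_i$; since a homeomorphism of $\overline{\SC}$ that is quasiconformal off a finite union of quasicircles is quasiconformal, $F$ is the desired extension, with dilatation bounded in terms of $D$ and $d$ alone.

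The main obstacle is the step localized at the crosscut endpoints on each $\Gamma_i$. There two adjacent pieces $D_j,D_k$ meet, and although $f$ is a single quasiconformal map on the $D$-side throughout a neighborhood of such a corner, one must verify both that the assembled curve $f(\Gamma_i)$ has no cusp there---so that it is genuinely a quasicircle and not merely a union of quasi-arcs---and that the boundary correspondence $f|_{\Gamma_i}$ is quasisymmetric across the corner. This is precisely the point at which the finite connectivity goes beyond the single quasidisk case; I expect it to require a careful comparison, near each corner, of the global map $f$ with the two local extensions $F_j$ and $F_k$, in the spirit of Osgood's treatment of the analytic case \cite{O80}.
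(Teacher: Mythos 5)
Your overall shape is right---reduce the theorem to showing that the images of the boundary components are quasicircles, then glue in extensions over the complementary components (your Beurling--Ahlfors gluing is essentially a proof of Springer's theorem, which the paper simply cites)---but the step you flag as the ``main obstacle'' is a genuine gap, and it is precisely the step the whole proof turns on. A decomposition of $D$ by crosscuts has the defect that at a crosscut endpoint $p\in\Gamma_i$ no single piece $D_j$ contains a neighborhood of $p$ in $\Gamma_i$ in its boundary. Consequently you only know that $f(\Gamma_i)$ is, near $f(p)$, a union of two quasiconformal arcs with a common endpoint, and such a union need not be a quasiconformal arc (the bounded-turning condition can fail at the junction); your remark that the arcs meet at nonzero angles in the \emph{domain} gives no control on the image, since a harmonic quasiconformal map can destroy angles. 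You also assert without proof that the arcs ``fit together to form the Jordan curve $f(\Gamma_i)$'': global injectivity of $f$ on $D$ does not by itself give injectivity of the continuous boundary extension on $\Gamma_i$, and with a crosscut decomposition the images of two non-adjacent boundary arcs are not a priori disjoint, because those arcs need not lie on the boundary of a common piece to which the boundary-injectivity result of \cite{Ef} applies.

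The paper resolves both points with one device that your proposal is missing: Osgood's quasiconformal decomposition $\mathfrak{D}$ of $D$ (Lemma~B), whose members are quasidisks that \emph{overlap} so that (i) any two points of $D$ lie in the closure of a common member, and (ii) every point of $\partial D$ is interior to a boundary arc lying entirely in $\partial\De$ for a single $\De\in\mathfrak{D}$. Property (i) yields that the boundary extension is injective on each $C_j$ (each pair of the four distinguished subarcs of $C_j$ lies on the boundary of one of the three quasidisks $\De_{0,j-1},\De_{0j},\De_k$, on which $f$ is injective up to the boundary by Theorem~1 of \cite{Ef}), and property (ii) lets one apply Theorem~3 of \cite{Ef} to see that each point of $f(C_j)$ has a subarc neighborhood contained in the boundary of a quasidisk, whence $f(C_j)$ is a quasicircle by the local characterization of quasicircles (Theorem~II.8.7 in \cite{LV})---no corner analysis is ever needed. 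Your argument could in principle be repaired by supplementing the crosscut pieces with additional quasidisks in $D$ whose boundaries contain neighborhoods in $\Gamma_i$ of each crosscut endpoint, but as written the proof is incomplete exactly where you say you ``expect'' an argument in the spirit of Osgood: that expected argument is the content of Sections~2 and~3 of the paper.
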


As mentioned above, for the case when $D$ is a (simply connected) quasidisk this was shown in \cite{Ef} while, on the other hand, for the case $d=0$ (when $f$ is analytic) this was proved by Osgood in \cite{O80}. Osgood's proof amounts to proving a univalence criterion in $f(D)$. Such an approach does not seem to work here since for a holomorphic $\phi$ on $f(D)$ the composition $\phi\circ f$ is not, in general, harmonic. 

Since isolated boundary points are removable for quasiconformal mappings (see \cite[Ch.I, \S8.1]{LV}), we may assume for the proof of Theorem~\ref{main-thm} that $\partial D$ consists of $n$ non-degenerate quasicircles. Our proof will be based on the following theorem of Springer~\cite{Sp64} (see also \cite[Ch.II, \S8.3]{LV}).

\begin{theoremO}[\cite{Sp64}]\label{Springer-thm}
Let $D$ and $D'$ be two $n$-tuply connected domains whose boundary curves are quasicircles. Then every quasiconformal mapping of $D$ onto $D'$ can be extended to a quasiconformal mapping of the whole plane.
\end{theoremO}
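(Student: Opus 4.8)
The plan is to localize the global extension problem at each boundary component, reducing it to two tasks governed by the standard characterizations of quasicircles—namely that a quasidisk is a global quasiconformal image of $\SD$, and that quasicircles are removable for quasiconformal homeomorphisms. By the remark preceding the statement we may assume $\partial D$ and $\partial D'$ consist of $n$ disjoint non-degenerate quasicircles $\Gamma_1,\dots,\Gamma_n$ and $\Gamma_1',\dots,\Gamma_n'$. On the sphere the complement $\overline{\SC}\setminus D$ has $n$ components whose interiors are the Jordan domains $\Om_1,\dots,\Om_n$ bounded by the $\Gamma_j$, and likewise $\Om_1',\dots,\Om_n'$ for $D'$; each $\Om_j,\Om_j'$ is then a quasidisk, the closures $\overline{\Om_j}$ are pairwise disjoint, $\overline{\SC}=\overline{D}\cup\bigcup_j\Om_j$, and $\overline{D}\cap\overline{\Om_j}=\Gamma_j$. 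Since a quasiconformal map between Jordan domains extends to a homeomorphism of the closures, $f$ first extends to a homeomorphism $\overline{D}\to\overline{D'}$ carrying each $\Gamma_j$ onto $\Gamma_j'$; write $f_j=f|_{\Gamma_j}$. The strategy is to extend each $f_j$ to a quasiconformal map $F_j\colon\Om_j\to\Om_j'$ of the complementary quasidisks agreeing with $f$ on $\Gamma_j$, and then to set $\tilde f=f$ on $\overline{D}$ and $\tilde f=F_j$ on $\overline{\Om_j}$.

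For the extension step the key observation is that $f_j$ is the boundary value of a map quasiconformal on a one-sided collar of $\Gamma_j$ lying in $D$. Conjugating by global quasiconformal maps $\phi_j,\phi_j'$ of $\overline{\SC}$ with $\phi_j(\SD)=\Om_j$ and $\phi_j'(\SD)=\Om_j'$ (which exist precisely because $\Om_j,\Om_j'$ are quasidisks) turns $f_j$ into a homeomorphism $\psi_j\colon\ST\to\ST$ that extends quasiconformally to an exterior collar of $\ST$; hence $\psi_j$ is quasisymmetric. Since a quasisymmetric self-homeomorphism of $\ST$ extends to a quasiconformal self-map $\Psi_j$ of $\SD$ (the Beurling--Ahlfors extension), the map $F_j=\phi_j'\circ\Psi_j\circ\phi_j^{-1}$ is quasiconformal from $\Om_j$ onto $\Om_j'$, and a direct unwinding of the conjugation shows $F_j=f$ on $\Gamma_j$. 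Equivalently, one may produce $F_j$ from a quasiconformal reflection $\si_j$ across $\Gamma_j$ together with a reflection $\si_j'$ across $\Gamma_j'$, using $\si_j,\si_j'$ to transport $f$ from the collar on the $D$-side onto a collar inside $\Om_j$ and then filling in. In either case the resulting $\tilde f$ is a well-defined homeomorphism of $\overline{\SC}$ that is quasiconformal off $\bigcup_j\Gamma_j$.

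The main obstacle is the final gluing: one must show that a homeomorphism of $\overline{\SC}$ that is quasiconformal in the complement of the $\Gamma_j$ is quasiconformal across them as well. This is exactly the removability of quasicircles, and it is the step where the quasicircle hypothesis is indispensable—a general Jordan seam need not be removable, and since quasicircles can carry positive area the naive ``measure zero'' argument is unavailable. I would treat it locally and one curve at a time, the $\Gamma_j$ being disjoint closed sets: using again that $\phi_j(\ST)=\Gamma_j$ for the global quasiconformal map $\phi_j$, the composition $\tilde f\circ\phi_j$ is quasiconformal off $\ST$ in a neighborhood of $\ST$, with uniformly bounded dilatation on each side and matching continuously. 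As $\ST$ is a round circle of measure zero, it is removable for quasiconformal homeomorphisms by the analytic definition (the ACL property and the $L^\infty$ bound on the Beltrami coefficient persist across $\ST$), so $\tilde f\circ\phi_j$, and hence $\tilde f=(\tilde f\circ\phi_j)\circ\phi_j^{-1}$, is quasiconformal near $\Gamma_j$. Since quasiconformality is local and $\tilde f$ is already quasiconformal off every $\Gamma_j$, it follows that $\tilde f$ is quasiconformal on all of $\overline{\SC}$ and extends $f$, as required.
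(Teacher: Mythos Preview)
The paper does not give its own proof of this statement: Theorem~A is quoted from \cite{Sp64} (with a pointer to \cite[Ch.~II, \S8.3]{LV}) and used as a black box in the proof of Theorem~\ref{main-thm}. There is therefore nothing in the paper to compare your argument against; I can only comment on the proposal itself.

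Your outline is the standard one and is essentially correct: extend $f$ to $\overline{D}$, straighten each boundary quasicircle to $\ST$ by a global quasiconformal chart, read off quasisymmetry of the induced circle homeomorphism from the one-sided collar, fill in each complementary quasidisk via Beurling--Ahlfors, and finally remove the seams by pulling them back to $\ST$. One phrasing should be tightened: the sentence ``since a quasiconformal map between Jordan domains extends to a homeomorphism of the closures, $f$ first extends to a homeomorphism $\overline{D}\to\overline{D'}$'' invokes a simply connected result for an $n$-tuply connected $D$. The conclusion is correct, but the justification should be local---e.g., conjugate near each $\Gamma_j$ by your $\phi_j$ to reduce to the classical boundary extension across a circular arc, or cite directly the boundary-correspondence theorem for quasiconformal maps at locally connected boundary components. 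With that adjustment, and noting (as you do) that only finitely many pieces are glued so the dilatation bound is uniform, the argument goes through.
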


Hence, to prove Theorem~\ref{main-thm} it suffices that we show that the boundary components of $f(D)$ are quasicircles. We prove this in Section~\ref{sect-proof}. It relies on Osgood's \cite{O80} quasiconformal decomposition, which we briefly present in Section~\ref{sect-QC-decomp}. In Section~\ref{sect-uniform} we give a univalence criterion on uniform domains.

\section{Quasiconformal Decomposition} \label{sect-QC-decomp}

Let $D$ be a domain in $\overline{\SC}$. A collection $\mathfrak{D}$ of domains $\De \subset D$ is called a $K$-quasiconformal decomposition of $D$ if each $\De$ is a $K$-quasidisk and any two points $z_1,z_2\in D$ lie in the closure of some $\De\in\mathfrak{D}$. This definition was introduced by Osgood in \cite{O80}, along with the following lemma. 

\begin{lemmO}[\cite{O80}] \label{lem-qc-decomp}
If $D$ is a finitely connected domain and each component of $\partial D$ is either a point or a quasicircle then $D$ is quasiconformally decomposable. 
\end{lemmO}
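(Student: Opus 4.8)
The plan is to reduce the lemma, via conformal uniformization and Springer's theorem, to the case of a circle domain, where the required quasidisks can be built by hand. First I would dispose of the isolated boundary points: as noted before the statement of Theorem~\ref{main-thm} these are removable, so I may assume $\partial D=\Gamma_1\cup\cdots\cup\Gamma_n$ with each $\Gamma_j$ a non-degenerate quasicircle. By Koebe's circle-domain uniformization theorem there is a circle domain $\Omega\subset\overline{\SC}$ of the same connectivity $n$ (each non-degenerate complementary continuum of $D$ forcing a genuine circle, not a point) together with a conformal map $g:\Omega\to D$; after a M\"obius change of coordinates I may assume $\infty\in\Omega$. Since a conformal map is $1$-quasiconformal and both $\partial\Omega$ (finitely many round circles) and $\partial D$ consist of quasicircles, Springer's theorem (Theorem~\ref{Springer-thm}) lets me extend $g$ to a $K_F$-quasiconformal homeomorphism $F:\overline{\SC}\to\overline{\SC}$ with $F(\Omega)=D$.

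The heart of the argument is then to show that the circle domain $\Omega$ admits a $K_0$-quasiconformal decomposition $\mathfrak{D}_0$ for some $K_0=K_0(\Omega)$. Given $z_1,z_2\in\Omega$, I would join them by an arc $\gamma\subset\Omega$ built from finitely many line segments and arcs of circles concentric with the bounding circles, chosen to meet $\partial\Omega$ non-tangentially, and take $\Delta$ to be a tubular neighborhood of $\gamma$ of radius comparable to $\operatorname{dist}(\gamma,\partial\Omega)$. Such a ``cigar'' region is a quasidisk whose closure contains $z_1$ and $z_2$, and the collection of all these regions, as $z_1,z_2$ range over $\Omega$, forms the desired family.

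To finish, I would push the decomposition forward by $F$. If $\Delta\in\mathfrak{D}_0$ is a $K_0$-quasidisk, say $\Delta=h(\SD)$ with $h$ a $K_0$-quasiconformal self-map of $\overline{\SC}$, then $F(\Delta)=(F\circ h)(\SD)$ is a quasidisk with constant at most $K_0K_F$, and $F(\Delta)\subset F(\Omega)=D$. Since $F$ is a homeomorphism of $\overline{\SC}$ it carries closures to closures, so for $w_1,w_2\in D$ the region $F(\Delta)$ associated to the pair $F^{-1}(w_1),F^{-1}(w_2)$ satisfies $w_1,w_2\in\overline{F(\Delta)}$. Hence $\{F(\Delta):\Delta\in\mathfrak{D}_0\}$ is a $(K_0K_F)$-quasiconformal decomposition of $D$.

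The main obstacle I anticipate is the uniformity of the quasidisk constant $K_0$ in the circle-domain step: as $z_1,z_2$ approach $\partial\Omega$ the connecting arc must hug the bounding circles and the tube becomes thin and curved, and one must verify that the Ahlfors three-point (linear local connectivity) constant of the tubular neighborhood stays bounded independently of the pair. This is where the finiteness of the number of holes and their fixed radii enter; everything else---the uniformization, the application of Springer, and the push-forward estimate $K\mapsto K_0K_F$---is routine.
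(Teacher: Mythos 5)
Your first step---uniformizing onto a circle domain and transporting the decomposition back with Springer's theorem---is exactly how the paper begins its proof, and the push-forward estimate $K\mapsto K_0K_F$ is routine and correct. The divergence, and the gap, lies in what you build on the circle domain. The paper (following Osgood) constructs a \emph{finite}, explicitly structured collection: for $n=2$, three overlapping sectors of an annulus; for $n\geq3$, a conformal map $\Psi$ onto a radial-slit domain and the $2n$ domains $\Delta_{0j}=\Psi^{-1}(G_0'\cup S_j')$ and $\Delta_j=\Psi^{-1}(D'\backslash\Delta_{0j}')$. Your ``cigar'' construction instead produces an uncountable family containing one quasidisk for each pair of points; this is precisely the weaker, Gehring--Osgood-type decomposability that the paper is careful to distinguish in Section~\ref{sect-uniform} (``in contrast to Osgood's decomposition, here $\De$ depends on the points''). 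In effect you are proving that a circle domain is a uniform domain, which is a strictly weaker conclusion than the lemma is meant to deliver.

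This matters beyond bookkeeping. The proof of Theorem~\ref{main-thm} in Section~\ref{sect-proof} does not invoke the bare statement of Lemma~\ref{lem-qc-decomp} but the specific members of Osgood's collection: it needs three fixed domains $\Delta_{0,j-1},\Delta_{0j},\Delta_k$ whose boundaries contain, in a prescribed combinatorial pattern, the four arcs $\Sigma_j(m)^{\pm}$ into which each boundary quasicircle $C_j$ is cut, so that injectivity of $f$ up to the boundary of each of these three domains forces $f(C_j)$ to be a Jordan curve, and so that every point of $f(C_j)$ lies on a subarc contained in the boundary of one of the three quasidisks $f(\Delta)$. A family of thin tubes around arcs joining interior points carries no such boundary arcs and cannot play this role. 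Finally, even within your own approach, the one genuinely hard analytic point---that the cigar regions are $K_0$-quasidisks with $K_0$ independent of the pair, including pairs close to, or separated by, the boundary circles---is flagged but not proved; that verification is essentially the whole content of the claim that circle domains are uniform, so as written the argument assumes what it needs to establish.
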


We now present, almost verbatim, the construction proving Lemma~\ref{lem-qc-decomp}. We focus on the parts of the construction we will be needing, maintaining the notation of \cite{O80} and skipping all the relevant proofs. The interested reader should consult \cite{O80} for further details. 

As we mentioned earlier, we may assume that $\partial D$ consists of non-degenerate quasicircles $C_0, C_1, \ldots, C_{n-1}$, for $n\geq2$. Let $F$ be a conformal mapping of $D$ onto a circle domain $D'$. Then, with an application of Theorem~\ref{Springer-thm} to $F^{-1}$, it will be sufficient to find a quasiconformal decomposition of $D'$. Hence we may assume that $D$ itself is a circle domain with boundary circles $C_j, \, j=0,\ldots,n-1$. 

If $n=2$ then we may assume that $D$ is the annulus $1<|z|<R$. Then the domains 
$$
\De_1=\{z\in D \,:\, 0<{\rm arg}(z)<\tfrac{4\pi}{3}\}, \qquad \De_2=e^{2\pi i /3} \De_1, \qquad \De_3=e^{4\pi i /3} \De_1
$$
make a quasiconformal decomposition of $D$. 

Let $n\geq3$. Then there exists a conformal mapping $\Psi$ of the circle domain $D$ onto a domain $D'$ consisting of the entire plane minus $n$ finite rectilinear slits lying on rays emanating from the origin. The mapping can be chosen so that no two distinct slits lie on the same ray. The boundary behavior of $\Psi$ is the following: it can be analytically extended to $\overline{D}$, and the two endpoints of the slit $C_j'=\Psi(C_j)$ correspond to two points on the circle $C_j$ which partition $C_j$ into two arcs, each of which is mapped onto $C_j'$ in a one-to-one fashion. 

Let $\xi_j$ be the endpoint of $C_j'$ furthest from the origin and let $Q_j'$ be the part of the ray that joins $\xi_j$ to infinity. Let also $S_j'$ be the sector between $C_j'$ and $C_{j+1}'$. Let $\om_j'$ be the midpoint of $C_j'$ and let $P_j'$ be a polygonal arc joining $\om_j'$ to $\om_{j+1}'$ that, except for its endpoints, lies completely in $S_j'$. Then
$$
P'=\bigcup_{j=0}^{n-1} P_j'
$$
is a closed polygon separating $0$ from $\infty$ that does not intersect any of the $Q_j'$. Let $G_0'$ and $G_1'$ be the components of $D'\backslash P'$ that contain $0$ and $\infty$, respectively. Now define
$$
\Delta_{0j}' = G_0' \cup S_j', \qquad \Delta_j' = D' \backslash \Delta_{0j}' 
$$
and
$$
\mathfrak{D}' = \{\Delta_{0j}' \, : \, j=0,1,\ldots,n-1 \} \cup \{\Delta_j' \, : \, j=0,1,\ldots,n-1 \}. 
$$
This collection has the covering property for $D'$. 

We denote the various parts of $D$ corresponding under $\Psi^{-1}$ to those of $D'$ by the same symbol without the prime. Then 
$$
\mathfrak{D} = \{\Delta_{0j} \, : \, j=0,1,\ldots,n-1 \} \cup \{\Delta_j \, : \, j=0,1,\ldots,n-1 \} 
$$
is a quasiconformal decomposition of $D$.

\section{Proof of Theorem~\ref{main-thm}} \label{sect-proof}

Let $f$ be a mapping in $D$ as in Theorem~\ref{main-thm}. By Theorem~2 in \cite{Ef}, $f$ is injective if $c$ is sufficiently small. Also, $f$ extents continuously to $\partial D$ since every boundary point of $D$ belongs to $\partial \Delta$ for some $\Delta$ in the collection $\mathfrak{D}$ and, by Theorem~1 in \cite{Ef}, the restriction of $f$ on $\Delta$ admits a homeomorphic extension to $\overline{\SC}$. 

Let $\Psi$ be a conformal mapping of $D$ onto the slit domain $D'$ of the previous section. Let $C_j$ be a boundary quasicircle of $D$. We first prove that $f(C_j)$ is a Jordan curve. The slit $C_j'$ is divided by its midpoint $\om_j'$ into two line segments, which we denote by $\Sigma_j'(m), m=1,2$, so that 
$$
\Sigma_j'(1)=\{z\in C_j' : |z|\leq|\om_j'|\} \qquad \text{and} \qquad \Sigma_j'(2)=\{z\in C_j' : |z|\geq|\om_j'|\}. 
$$
Let $\Sigma_j'(m)^{\pm}$ denote the two sides of $\Sigma_j'(m)$, so that a point $z_0$ on $\Sigma_j'(m)^-$ is reached only by points $z \in S_{j-1}'$, meaning that $\arg z \to (\arg z_0)^-$ when $z\to z_0$. Similarly, a point $z_0$ on $\Sigma_j'(m)^+$ is reached only by points $z \in S_j'$, so that $\arg z \to (\arg z_0)^+$ when $z\to z_0$. Corresponding under $\Psi^{-1}$ are four disjoint -except for their endpoints- arcs on the quasicircle $C_j$, denoted without the prime by $\Sigma_j(m)^{\pm}, m=1,2$. Now consider the domains $\Delta_{0,j-1}, \Delta_{0j}$ and $\Delta_k$ in the collection $\mathfrak{D}$, for some $k\neq j-1, j$; see Figure~\ref{fig} for their images under $\Psi$. By Theorem~1 in \cite{Ef} $f$ is injective up to the boundary of each $\Delta\in\mathfrak{D}$. Note that the arcs $\Sigma_j(1)^-, \Sigma_j(1)^+$ and $\Sigma_j(2)^-$ are subsets of $\partial\Delta_{0,j-1}$, so that their images under $f$, except for their endpoints, are disjoint. It remains to show that the images of these three arcs under $f$ are not intersected by the remaining image $f(\Sigma_j(2)^+)$. Note that the arcs $\Sigma_j(1)^-, \Sigma_j(1)^+$ and $\Sigma_j(2)^+$ are subsets of $\partial\Delta_{0j}$, so that $f(\Sigma_j(2)^+)$ does not intersect $f(\Sigma_j(1)^-)$ nor $f(\Sigma_j(1)^+)$. What remains to be seen is that $f(\Sigma_j(2)^-)$ and $f(\Sigma_j(2)^+)$ are disjoint and this follows from the fact that the arcs $\Sigma_j(2)^-$ and $\Sigma_j(2)^+$ are subsets of $\partial\Delta_k$.

\begin{figure}  
\includegraphics[scale=.44]{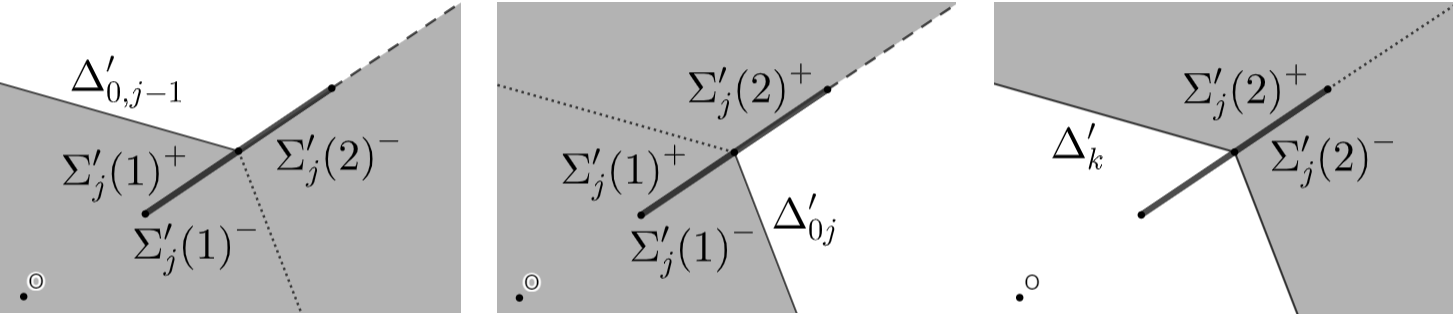} 
\caption{The slit $C_j'$ and the three distinguished domains.} \label{fig}
\end{figure}

To see that the Jordan curve $f(C_j)$ is actually a quasicircle note that each point of $f(C_j)$ belongs to some open subarc of $f(C_j)$ which is entirely included in the boundary of either $f(\Delta_{0,j-1}), f(\Delta_{0j})$ or $f(\Delta_k)$. These three domains are quasidisks by Theorem~3 in \cite{Ef}. Now the assertion that $f(C_j)$ is a quasicircle follows by an application of Theorem~8.7 in \cite[Ch.II, \S8.9]{LV}.

\section{Remarks on uniform domains} \label{sect-uniform}  

A domain $D$ in $\SC$ is called uniform if there exist positive constants $a$ and $b$ such that each pair of points $z_1,z_2\in D$ can be joined by an arc $\ga \subset D$ so that for each $z\in \ga$ it holds
$$
\ell(\ga) \leq a \, |z_1-z_2|
$$
and 
$$
\min_{j=1,2}\ell(\ga_j) \leq b \, {\rm dist}(z,\partial D),
$$
where $\ga_1, \ga_2$ are the components of $\ga\backslash\{z\}$, ${\rm dist}(z,\partial D)$ denotes the euclidean distance from $z$ to the boundary of $D$ and $\ell(\cdot)$ denotes euclidean length. Uniform domains were introduced by Martio and Sarvas \cite{MS79}; see also, \emph{e.g.}, \cite{GO79} for this equivalent definition. In \cite{MS79} it was shown that all boundary components of a uniform domain are either points or quasicircles. The converse of this is also true for finitely connected domains, but not, in general, for domains of infinite connectivity; see \cite[\S3.5]{GeHa}. The following univalence criterion was proved in \cite{MS79}.

\begin{theoremO}[\cite{MS79},\cite{GO79}] \label{Thm-uniform-1-1}
If $D$ is a uniform domain then there exists a constant $c>0$ such that every analytic function $f$ in $D$ with $\|S_f\|_D\leq c$ is injective. 
\end{theoremO}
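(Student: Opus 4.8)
The plan is to reduce Theorem~\ref{Thm-uniform-1-1} to Ahlfors' univalence criterion on a single quasidisk by means of a quasiconformal decomposition, exactly as in the finitely connected case treated above but now furnished by the geometry of uniform domains rather than by Lemma~\ref{lem-qc-decomp}. The decisive structural fact is that a uniform domain $D$ is $K$-quasiconformally decomposable, with $K$ depending only on the uniformity constants $a$ and $b$: given $z_1,z_2\in D$, join them by a uniform arc $\ga$ and take a suitable cigar (double-cone) neighbourhood $\De=\De(\ga)\subset D$ of $\ga$. The two defining inequalities of a uniform domain are precisely what guarantee that each such $\De$ is itself a bounded uniform simply connected domain, hence a $K$-quasidisk, and that $z_1,z_2\in\overline{\De}$; letting $\ga$ range over all pairs produces a collection $\mathfrak{D}$ of $K$-quasidisks with the covering property. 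I expect this to be the main obstacle, since verifying that the cigar is a quasidisk with a constant independent of the chosen pair is the genuinely geometric step, and it is where the work of Gehring--Osgood~\cite{GO79} and Martio--Sarvas~\cite{MS79} is concentrated.

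The second ingredient is the monotonicity of the Schwarzian norm under restriction. If $\De\subset D$ then $\la_\De\geq\la_D$ on $\De$ by the Schwarz--Pick lemma applied to the inclusion, and since $S_f$ is intrinsic we have $S_{f|_\De}=S_f|_\De$; hence
$$
\|S_f\|_\De=\sup_{z\in\De}\la_\De(z)^{-2}|S_f(z)|\leq \sup_{z\in\De}\la_D(z)^{-2}|S_f(z)|\leq\|S_f\|_D.
$$
This is the gluing property that lets a local criterion propagate to the whole domain.

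With these in hand the proof is immediate. By Ahlfors' theorem~\cite{Ah} quoted in the introduction there is a constant $c=c(K)>0$ such that any analytic function on a $K$-quasidisk whose Schwarzian norm is at most $c$ is univalent there and admits a homeomorphic extension to the closure. I would take this $c$ as the constant in the statement; it depends only on $K$, hence only on the uniformity constants of $D$. Now assume $\|S_f\|_D\leq c$ and let $z_1\neq z_2$ be two points of $D$. Choose $\De\in\mathfrak{D}$ with $z_1,z_2\in\overline{\De}$. Then $\|S_f\|_\De\leq\|S_f\|_D\leq c$, so $f$ restricted to $\De$ is injective up to the boundary; in particular $f(z_1)\neq f(z_2)$. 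Since the pair was arbitrary, $f$ is injective on $D$.

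A more hands-on alternative would bypass the decomposition and argue directly: writing $S_f=2b$, injectivity of $f=w_1/w_2$ is equivalent to disconjugacy of $w''+bw=0$, that is, to the statement that no nontrivial solution vanishes at two points of $D$. One would connect two putative zeros by a uniform arc and invoke a Lyapunov-type inequality to derive a contradiction from the smallness of $\int_\ga|S_f|$. The difficulty there is that $\|S_f\|_D$ controls $S_f$ only in the hyperbolic metric, so one must first compare $\la_D$ with $1/{\rm dist}(\cdot,\partial D)$ on the uniform domain in order to convert the hyperbolic bound into the Euclidean line integral that the ODE argument requires. The decomposition route avoids this comparison altogether, which is why I would prefer it.
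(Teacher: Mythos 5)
Your proposal is correct and takes essentially the same route the paper describes for this theorem (the Gehring--Osgood proof via the characterization of uniform domains as $K$-quasiconformally decomposable in the weak, pair-dependent sense) and mirrors the argument the paper gives for its own harmonic analogue: hyperbolic domain monotonicity gives $\|S_f\|_\De\leq\|S_f\|_D$, and Ahlfors' criterion with its homeomorphic extension yields injectivity up to $\overline{\De}$, hence $f(z_1)\neq f(z_2)$.
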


Gehring and Osgood \cite{GO79} gave a different proof of Theorem~\ref{Thm-uniform-1-1} by providing a characterization of uniform domains. They showed that a domain $D$ is uniform if and only if it is quasiconformally decomposable in the following weaker (than the one we saw in Section~\ref{sect-QC-decomp}) sense: there exists a constant $K$ with the property that for each $z_1,z_2\in D$ there exists a $K$-quasidisk $\De\subset D$ for which $z_1,z_2\in\overline{\De}$. Note that, in contrast to Osgood's \cite{O80} decomposition, here $\De$ depends on the points $z_1,z_2$. However, this can readily be used to generalize the implication (i) $\Rightarrow$ (iii) of Theorem~2 in \cite{Ef}, according to which a univalence criterion for harmonic mappings holds on finitely connected uniform domains. The following theorem extends it to all uniform domains. 

\begin{theorem}
Let $D$ be a uniform domain in $\SC$. Then there exists a constant $c>0$ such that if $f$ is harmonic in $D$ with $\|S_f\|_D \leq c$ then $f$ is injective. 
\end{theorem}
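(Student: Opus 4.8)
The plan is to reduce the theorem to the already-established univalence criterion on finitely connected (indeed simply connected quasidisk) domains by exploiting the local, two-point nature of the Gehring–Osgood characterization of uniform domains. The key structural observation is that injectivity is a statement about pairs of points: to show $f$ is injective on $D$, it suffices to show that $f(z_1)\neq f(z_2)$ for every pair of distinct points $z_1,z_2\in D$. The Gehring–Osgood characterization recalled just before the statement tells us precisely that, for a uniform domain, every such pair lies in the closure of a single $K$-quasidisk $\Delta\subset D$, where $K$ is a uniform constant independent of the pair.

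The main steps proceed as follows. First I would invoke the characterization of uniform domains due to Gehring and Osgood: there is a constant $K$, depending only on $D$, such that any two points $z_1,z_2\in D$ are contained in the closure of some $K$-quasidisk $\Delta\subset D$. Second, I would apply the harmonic univalence-and-extension criterion on quasidisks from \cite{Ef} (the result generalizing Hern\'andez and Mart\'in to quasidisks): there is a constant $c_0>0$, depending only on $K$, so that any harmonic $f$ on a $K$-quasidisk $\Delta$ with $\|S_f\|_\Delta\leq c_0$ is injective on $\Delta$ and in fact extends homeomorphically to $\overline\Delta$. Third, I would pass from the global Schwarzian bound to the local one via the monotonicity of the hyperbolic metric: since $\Delta\subset D$, we have $\lambda_D\leq\lambda_\Delta$ on $\Delta$, hence $\|S_f\|_\Delta\leq\|S_f\|_D\leq c$. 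Choosing $c=c_0$ guarantees that $f$ restricted to each such $\Delta$ is injective up to the boundary.

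Finally I would assemble these pieces. Given distinct $z_1,z_2\in D$, select the $K$-quasidisk $\Delta$ whose closure contains both points; since $f|_\Delta$ is injective on $\overline\Delta$ and $z_1,z_2\in\overline\Delta$ are distinct, it follows that $f(z_1)\neq f(z_2)$. As the pair was arbitrary, $f$ is injective on $D$. The value of $c$ depends only on $K$, which in turn depends only on $D$, as required.

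The step I expect to be the main obstacle—or at least the one demanding care—is ensuring that the univalence constant $c_0$ from the quasidisk criterion of \cite{Ef} depends only on the quasiconformality constant $K$ and not on the particular quasidisk $\Delta$; this uniformity is exactly what allows a single threshold $c$ to work simultaneously across the whole family of quasidisks supplied by the Gehring–Osgood decomposition. One must also confirm that the criterion of \cite{Ef} delivers injectivity \emph{up to the boundary} (injectivity on $\overline\Delta$), rather than merely on the open quasidisk, so that the two chosen points—which may lie on $\partial\Delta$—are genuinely separated by $f$. Both points are available from the results quoted in the excerpt, so the argument is a clean patching of the local criterion along the weak quasiconformal decomposition.
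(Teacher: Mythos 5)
Your proposal is correct and follows essentially the same route as the paper: the Gehring--Osgood two-point quasidisk characterization of uniform domains, domain monotonicity of the hyperbolic metric to get $\|S_f\|_\Delta\leq\|S_f\|_D$, and the boundary-injectivity (homeomorphic extension) from Theorem~1 of \cite{Ef} on each $K$-quasidisk. The only cosmetic difference is that the paper phrases the argument as a proof by contradiction, while you argue directly pair by pair; the uniformity of the constant in $K$ that you flag as the delicate point is indeed exactly what the paper relies on.
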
 
\begin{proof}
Assume that there exist distinct points $z_1,z_2\in D$ for which $f(z_1)=f(z_2)$. By \cite{GO79}, there exists a $K$-quasidisk $\De\subset D$ for which $z_1,z_2\in\overline{\De}$. The domain monotonicity for the hyperbolic metric shows that 
$$
\|S_f\|_\De \leq \|S_f\|_D \leq c. 
$$
But the homeomorphic extension of Theorem~1 in \cite{Ef} shows that if $c$ is sufficiently small then $f$ is injective up to the boundary of $\De$, a contradiction. 
\end{proof}

Regarding quasiconformal extension, Astala and Heinonen \cite{AH88} proved the following theorem. 

\begin{theoremO}[\cite{AH88}] \label{Thm-uniform-qc-ext}
If $D$ is a uniform domain then there exists a constant $c>0$ such that every analytic function $f$ in $D$ with $\|S_f\|_D\leq c$ admits a quasiconformal extension to $\overline{\SC}$. 
\end{theoremO}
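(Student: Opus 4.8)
The plan is to follow the same local-to-global strategy as in the finitely connected case, but with Osgood's rigid decomposition replaced by the weaker, point-dependent decomposition of Gehring and Osgood, and with Springer's gluing theorem replaced by an extension principle valid on arbitrary uniform domains. First I would fix the constant $K$ supplied by the Gehring--Osgood characterization, so that every pair $z_1,z_2\in D$ lies in the closure of some $K$-quasidisk $\De\subset D$. By the univalence criterion of Theorem~\ref{Thm-uniform-1-1}, for $c$ small enough $f$ is injective, hence a conformal homeomorphism of $D$ onto $f(D)$; being analytic and injective it is locally $1$-quasiconformal on $D$, so the entire difficulty is the extension across $\partial D$.

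The second step provides uniform local control. For any $K$-quasidisk $\De\subset D$, domain monotonicity of the hyperbolic metric gives $\|S_f\|_\De\leq\|S_f\|_D\leq c$, and Ahlfors' theorem then furnishes a $K_0$-quasiconformal extension of $f|_\De$ to $\overline{\SC}$ with $K_0=K_0(K,c)$ \emph{independent of $\De$}. Two consequences follow. On the one hand, the image of each such $\De$ is a quasidisk of constant bounded in terms of $K$ and $K_0$, and since the $f$-image of the decomposition of $D$ covers every pair in $f(D)$ (each $z_i\in\overline{\De}$ maps to $f(z_i)\in\overline{f(\De)}$ by the boundary homeomorphism of the local extension), the domain $f(D)$ inherits a weak quasiconformal decomposition; by the Gehring--Osgood characterization $f(D)$ is again uniform, quantitatively. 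On the other hand, applying standard quasihyperbolic distortion estimates to the local $K_0$-quasiconformal extensions shows that $f$ distorts the quasihyperbolic metrics by a bounded amount, so $f\colon(D,k_D)\to(f(D),k_{f(D)})$ is a quasi-isometry with constants depending only on those of $D$ and on $c$.

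The final step is to promote these data to a single global homeomorphism of $\overline{\SC}$. The plan is to pass to boundary values: a quasi-isometry between uniform domains in the quasihyperbolic metrics induces a quasisymmetric correspondence of their boundaries (prime ends), and together with the uniform interior dilatation bound this yields a quasiconformal homeomorphism of $\overline{\SC}$ agreeing with $f$ on $D$. I expect this gluing step to be the main obstacle, and it is precisely the content that cannot be imported from the finitely connected argument: the local Ahlfors extensions produced in the second step need not agree off $D$, so one cannot simply patch them, and Springer's theorem is unavailable once $D$ may have infinitely many boundary components. Making the passage from uniform \emph{interior} control to a genuine \emph{boundary} quasisymmetry rigorous, so that the extension theory for uniform domains applies, is the crux of the argument.
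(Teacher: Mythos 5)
First, a point of comparison: the paper does not prove this statement at all --- it is Theorem~\ref{Thm-uniform-qc-ext}, quoted from Astala and Heinonen \cite{AH88} --- so there is no in-paper argument to measure your proposal against. Judged on its own terms, your proposal has a genuine gap, and you have correctly located it yourself: the entire content of the theorem is concentrated in your final ``gluing'' step, which you do not carry out. The preceding steps are sound but cheap. Injectivity is Theorem~\ref{Thm-uniform-1-1}; the uniform local Ahlfors extensions on the $K$-quasidisks of the Gehring--Osgood decomposition do show that $f(D)$ is again uniform (a nice observation); but the quasihyperbolic quasi-isometry property is automatic for \emph{every} conformal map by the Koebe distortion theorem ($|f'(z)|\,{\rm dist}(z,\partial D)$ is comparable to ${\rm dist}(f(z),\partial f(D))$ with absolute constants), so that step uses neither the Schwarzian hypothesis nor uniformity and cannot be the engine of the proof.

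The proposed chain ``quasihyperbolic quasi-isometry between uniform domains $\Rightarrow$ quasisymmetric boundary correspondence $\Rightarrow$ quasiconformal extension to $\overline{\SC}$'' is not a valid sequence of implications as stated. A quasi-isometry is a coarse notion: it need not even be continuous, and it controls only a boundary at infinity; identifying the Gromov boundary of $(D,k_D)$ with the Euclidean boundary requires the Gromov hyperbolicity of uniform domains (a later theorem of Bonk, Heinonen and Koskela), and even then a quasi-isometry induces only a quasim\"obius map between the two compact sets $\partial D$ and $\partial f(D)$, which may have infinitely many components or be totally disconnected. Extending a quasim\"obius homeomorphism of such sets to a quasiconformal map of the sphere is itself a nontrivial extension problem that does not follow from general principles, and one must in addition make that extension agree with $f$ on $D$, i.e.\ fill in all complementary components coherently --- precisely the step for which Springer's Theorem~\ref{Springer-thm} was invoked in the finitely connected case and which is unavailable here. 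This is where the actual work of \cite{AH88} lies (they argue instead with maps uniformly close to M\"obius transformations at the Whitney scale and construct the extension directly), and your proposal leaves it as an acknowledged black box rather than a proof.
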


This evidently implies Theorem~\ref{Thm-uniform-1-1} and was also proved in substantially greater generality, but we omit it here. It is not clear how to generalize Theorem~\ref{Thm-uniform-qc-ext} to the setting of harmonic mappings. Therefore, we propose the following problem. 

\begin{problem}
Let $D$ be a uniform domain. Does there exist a constant $c>0$ such that if $f$ is harmonic in $D$ with $\|S_f\|_D \leq c$ and with dilatation $\om$ satisfying $\sup_{z\in D}|\om(z)| <1$ then $f$ admits a quasiconformal extension to $\overline{\SC}$? 
\end{problem}


\begin{thebibliography}{99}
\bibitem{Ah} L. Ahlfors, Quasiconformal reflections, \emph{Acta Math.} \textbf{109} (1963), 291-301.  

\bibitem{AH88} K. Astala, J. Heinonen, On quasiconformal rigidity in space and plane, \emph{Ann. Acad. Sci. Fenn. Ser. A I Math.} \textbf{13} (1988), no. 1, 81-92.

\bibitem{CDO03} M. Chuaqui, P. Duren, B. Osgood, The Schwarzian derivative for harmonic mappings, \textit{J. Anal. Math.} \textbf{91} (2003), 329-351. 

\bibitem{Du} P.L. Duren, \emph{Harmonic Mappings in the Plane}, Cambridge University Press, Cambridge, 2004.

\bibitem{Ef} I. Efraimidis, Criteria for univalence and quasiconformal extension for harmonic mappings on planar domains, preprint, arXiv:2009.14766. 

\bibitem{GeHa} F.W. Gehring, K. Hag, \emph{The ubiquitous quasidisk}, Amer. Math. Soc., Providence, RI, 2012. 

\bibitem{GO79} F.W. Gehring, B.G. Osgood, Uniform domains and the quasihyperbolic metric, \emph{J. Analyse Math.} \textbf{36} (1979), 50-74 (1980).

\bibitem{HM15} R. Hern\'andez, M.J. Mart\'in, Pre-Schwarzian and Schwarzian derivatives of harmonic mappings, \textit{J. Geom. Anal.} \textbf{25} (2015), no.~1, 64-91. 

\bibitem{HM15-2} R. Hern\'andez, M.J. Mart\'in, Criteria for univalence and quasiconformal extension of harmonic mappings in terms of the Schwarzian derivative, \textit{Arch. Math. (Basel)} \textbf{104} (2015), no.~1, 53-59. 

\bibitem{LV} O. Lehto, K.I. Virtanen, \emph{Quasiconformal mappings in the plane}, second edition, Springer-Verlag, New York-Heidelberg, 1973.  

\bibitem{MS79} O. Martio, J. Sarvas, Injectivity theorems in plane and space, \emph{Ann. Acad. Sci. Fenn. Ser. A I Math.} \textbf{4} (1979), no. 2, 383-401. 

\bibitem{O80} B. Osgood, Univalence criteria in multiply-connected domains, \emph{Trans. Amer. Math. Soc.} \textbf{260} (1980), no.~2, 459-473. 

\bibitem{Sp64} G. Springer, Fredholm eigenvalues and quasiconformal mapping, \emph{Acta Math.} \textbf{111} (1964), 121-142.
\end{thebibliography}
\end{document}